\theoremstyle{plain}
\newtheorem{thm}{Theorem}
\newtheorem{lem}[thm]{Lemma}
\newtheorem{cor}[thm]{Corollary}
\newtheorem*{MT}{Theorem}
\newtheorem*{ack}{Acknowledgment}
\newcommand{\ZZ}{\mathbb{Z}}
\newcommand{\CC}{\mathbb{C}}
\newcommand{\ot}{\otimes}
\newcommand{\ch}{\operatorname{char}}
\newcommand{\T}{\mathsf{T}}
\newcommand{\onto}{\twoheadrightarrow}
\newcommand{\into}{\hookrightarrow}
\renewcommand{\k}{\mathbb{k}}
\newcommand{\End}{\operatorname{End}}
\newcommand{\Ker}{\operatorname{Ker}}
\newcommand{\Id}{\operatorname{Id}}
\newcommand{\cen}{\mathscr{Z}}
\newcommand{\Th}{^\text{\rm th}}
\renewcommand{\d}{\delta}
\newcommand{\e}{\epsilon}
\newcommand{\FD}{{\textnormal{\bf FD}}}
\DeclareMathOperator{\Irr}{Irr}
\newcommand{\fC}{\mathfrak{C}}
\newcommand{\sH}{\mathscr{H}}
\newcommand{\Hcen}{\mathscr{H}\!\cen}
\renewcommand{\emph}[1]{{\bfseries\itshape #1}}
\begin{document}

\title[Frobenius divisibility and Hopf centers]
{Frobenius divisibility and Hopf centers}

\author{Adam Jacoby}

\address{Department of Mathematics, Temple University,
    Philadelphia, PA 19122}

%\email{adam.michael.jacoby@temple.edu}
%\urladdr{https://math.temple.edu/~tud55064/}

\subjclass[2010]{Primary 16Txx, 16Gxx}

\keywords{Hopf algebra, quasi-triangular Hopf algebra, semisimple Hopf algebra, Hopf center, irreducible representation}

\begin{abstract}
A classical theorem of I. Schur states that the degree of any irreducible 
complex representation of a finite group $G$ divides the order of $G/\cen G$,
where $\cen G$ is the center $G$. This note discusses similar divisibility results for certain classes of Hopf algebras.
\end{abstract}

\maketitle

%%%%%%%%%%%%%%%%%%%%%%%%%%%%%%%%%%%%%%%%%%%%%%%%%%
% Intro
%%%%%%%%%%%%%%%%%%%%%%%%%%%%%%%%%%%%%%%%%%%%%%%%%%

\section*{Introduction}

It is a well-known fact, originally proven by Frobenius \cite{Ff96}, that degree of any irreducible 
complex representation of a finite group $G$ divides the order of $G$; equivalently, the  
degrees of all irreducible representations of the group algebra $\CC G$ divide $\dim_{\CC}\CC G$.
In reference to this result, a finite-dimensional algebra $A$ over an arbitrary algebraically
closed base field $\k$ is said to have the ``Frobenius divisibility'' property if the following holds:
\begin{quote}
(\FD)\qquad  the degree of every irreducible representation of $A$ is a 
divisor of $\dim_\k A$.
\end{quote}
A random finite-dimensional $\k$-algebra will of course fail to satisfy \FD. Nonetheless,
motivated by Frobenius' Theorem, Kaplansky \cite{Ki75} stated the conjecture that {\FD}
does hold for all semisimple Hopf algebras over an algebraically closed field $\k$ of characteristic $0$.
This conjecture has remained open for 40 years. 

Returning to group algebras, Schur \cite[Satz VII]{Si04} strengthened Frobenius' Theorem by showing 
that the degree of any irreducible complex representation of a finite group $G$ does in fact
divide the order of the quotient $G/\cen G$, where $\cen G$ is the center $G$. The goal of this short note is to prove a version of Schur's Theorem for Hopf algebras that expands on the work in \cite{Sm02}. We work with finite-dimensional Hopf algebras
over an algebraically closed base field $\k$ of arbitrary
characteristic. A representation of such a Hopf algebra $H$
is given by a $\k$-vector space $V$ and an algebra map $\rho \colon H \to \End_\k(V)$; and 
$V$ is irreducible if and only if $\rho$ is surjective.
We define $\Hcen(V)$ to be the unique largest Hopf subalgebra of $H$ that is contained in 
the subalgebra $\rho^{-1}(\k \Id_V)$ of $H$. In analogy with the center
of a character \cite[2.27]{Im06}, $\Hcen(V)$ will be called the \emph{Hopf center} of $V$.
Since the dimension of any Hopf subalgebra of $H$ divides $\dim_\k H$ by the Nichols-Zoeller Theorem, it follows that
$\frac{\dim_\k H}{\dim_\k \Hcen(V)}$ is an integer. We may now state our result as follows.

\begin{MT}
Let $\fC$ be a class of finite-dimensional Hopf $\k$-algebras that is closed under tensor products 
and under taking (Hopf) homomorphic images. Assume that all $H \in \fC$ satisfy \FD. Then, 
for every $H \in \fC$ and every irreducible representation $V$ of $H$, 
$\dim_\k V$ is a divisor of $\frac{\dim_\k H}{\dim_\k \Hcen(V)}$.
\end{MT}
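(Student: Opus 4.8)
The plan is to mimic I.~Schur's own argument, with two substitutions: "quotient by a central subgroup" becomes "quotient by a Hopf ideal", and the need for that ideal to have exactly the right codimension is circumvented by passing to tensor powers and taking a $p$-adic limit. To set up: fix $H\in\fC$ and an irreducible representation $\rho\colon H\onto\End_\k(V)$, and write $K:=\Hcen(V)$. Since $K\subseteq\rho^{-1}(\k\Id_V)$, the restriction $\rho|_K$ is an algebra map $\lambda\colon K\to\k$, i.e.\ a grouplike element of $K^*$; let $d$ be its order in the group of grouplikes of $K^*$. Applying the Nichols--Zoeller theorem to the group algebra spanned by the grouplikes of $K^*$ gives $d\mid\dim_\k K$. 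What must be proved is $\dim_\k V\mid\dim_\k H/\dim_\k K$, the right-hand side being an integer, again by Nichols--Zoeller.

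Now the tensor-power step. For each $m\ge 1$ we have $H^{\otimes m}\in\fC$, and $V^{\otimes m}$ is an irreducible $H^{\otimes m}$-module because $\rho^{\otimes m}\colon H^{\otimes m}\to\End_\k(V)^{\otimes m}=\End_\k(V^{\otimes m})$ is surjective. Let $J_m$ be the largest Hopf ideal of $H^{\otimes m}$ contained in $\Ker\rho^{\otimes m}$, and put $\overline H_m:=H^{\otimes m}/J_m$. Then $\overline H_m\in\fC$ (homomorphic image), $\rho^{\otimes m}$ factors through $\overline H_m$, and $V^{\otimes m}$ is still irreducible over $\overline H_m$; so $\FD$ applied to $\overline H_m$ gives $(\dim_\k V)^m\mid\dim_\k\overline H_m$.

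The heart of the matter is the estimate $\dim_\k\overline H_m\mid d\cdot(\dim_\k H/\dim_\k K)^m$, which I would prove on the dual side. By construction $\overline H_m^{\,*}$ is the Hopf subalgebra of $(H^*)^{\otimes m}$ generated by $C^{\otimes m}$, where $C\subseteq H^*$ is the (simple) coefficient coalgebra of $V$. Because $\rho(K)$ consists of scalars, with scalar $\lambda$, the restriction map $H^*\onto K^*$ carries $C$ onto $\k\lambda$, hence carries $\overline H_m^{\,*}$ onto the group algebra of $\gen{\lambda^{\otimes m}}$ inside $(K^{\otimes m})^*$, a Hopf algebra of dimension $d$. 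Freeness of a finite-dimensional Hopf algebra over the coinvariants of a quotient then gives $\dim_\k\overline H_m=d\cdot\dim_\k N_m$, where $N_m$ is the coinvariant subalgebra for that quotient; and $N_m$ lies inside the coideal subalgebra $\bigl((H^*)^{\otimes m}\bigr)^{\mathrm{co}\,(K^{\otimes m})^*}$, whose dimension is $(\dim_\k H/\dim_\k K)^m$, so $\dim_\k N_m$ divides this number by the Nichols--Zoeller theorem and its extension to coideal subalgebras. Combining with the previous paragraph, $(\dim_\k V\cdot\dim_\k K)^m$ divides $d\cdot(\dim_\k H)^m$ for every $m\ge 1$; comparing, for each prime $p$, the exponents of $p$ on the two sides, dividing by $m$, and letting $m\to\infty$ gives $\dim_\k V\cdot\dim_\k K\mid\dim_\k H$, which is the assertion.

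The step I expect to be the main obstacle is the dimension estimate just sketched: $\Hcen(V)$ need not be a \emph{normal} Hopf subalgebra of $H$, so there is no quotient Hopf algebra "$H/\!\!/\Hcen(V)$" of dimension $\dim_\k H/\dim_\k K$ to exploit directly, and the non-normality is exactly what forces the detour through the coinvariant subalgebras above together with the Nichols--Zoeller-type freeness results for (non-Hopf) coideal subalgebras; pinning down those dimension and divisibility facts is where the genuine work lies. By contrast the tensor-power bootstrap and the arithmetic limit are a formal, if essential, packaging step.
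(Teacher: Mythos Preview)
Your overall architecture---pass to tensor powers, quotient by a Hopf ideal, apply \FD, then extract the divisibility by an arithmetic limit---is exactly the paper's. But the implementation diverges at the decisive step, and the divergence leaves a real gap.

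The problematic sentence is ``$\dim_\k N_m$ divides this number by the Nichols--Zoeller theorem and its extension to coideal subalgebras.'' Nichols--Zoeller gives freeness of a finite-dimensional Hopf algebra over a Hopf \emph{sub}algebra; the known extensions to coideal subalgebras (Skryabin and others) require extra hypotheses such as semisimplicity, which the theorem does not assume---the paper works over an algebraically closed field of arbitrary characteristic. Worse, what you actually need is that the coideal subalgebra $R_m:=\bigl((H^*)^{\ot m}\bigr)^{\text{co}\,(K^{\ot m})^*}$ be free over the smaller coideal subalgebra $N_m$; neither object is a Hopf algebra, so no form of Nichols--Zoeller applies. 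You flag this step as the main obstacle, and rightly so: as stated, it does not go through.

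The paper avoids this by a reduction you do not make. It first passes to $\overline H=H/\sH\Ker V$, over which $V$ is inner faithful; a separate lemma (proved via Hopf commutators and Rieffel's faithfulness criterion for $\T V$) shows that in the inner-faithful case $\Hcen(V)=\zeta(H)$, the largest Hopf subalgebra contained in the ordinary center. The point is that $\zeta(H)$ is \emph{commutative}, so the $n$-fold multiplication $\mu_n\colon\zeta(H)^{\ot n}\to\zeta(H)$ is a genuine Hopf algebra map; its kernel then generates a Hopf ideal of $H^{\ot n}$ killed by $\rho^{\ot n}$, and Nichols--Zoeller freeness of $H^{\ot n}$ over $\zeta(H)^{\ot n}$ computes the quotient dimension \emph{exactly} as $(\dim H)^n/(\dim\zeta(H))^{n-1}$. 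No coideal-subalgebra freeness is needed. A second, straightforward application of Nichols--Zoeller (to actual Hopf quotients) transfers the conclusion from $\overline H$ back to $H$. Incidentally, your stated obstacle---that $\Hcen(V)$ need not be normal---is not one the paper shares: it asserts and uses that $\Hcen(V)$ is a normal Hopf subalgebra, so the quotient $H/H\Hcen(V)^+$ is a Hopf algebra and the final Nichols--Zoeller step is legitimate.
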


Taking $\fC$ to be the class of all finite complex group algebras, all of whose members satisfy {\FD} 
by Frobenius' Theorem, we obtain Schur's result. The proof of the theorem, whose main part is an 
adaptation of an argument due to Tate, will be given in Section~\ref{S:Pf}
after deploying a few preliminaries in Section~\ref{S:Prelims}. The final Section~\ref{S:Apps} presents some applications
of the theorem and its method of proof.

The above notation and terminology remains in effect throughout this note.
In particular, $\k$ denotes an algebraically closed field. Our notation concerning
Hopf algebras follows \cite{Ms93} and \cite{Rd12}.

\section{Preliminaries}
\label{S:Prelims}

\subsection{Hopf Commutators}

Given two elements $h$ and $k$ of a Hopf algebra $H$, we define the
\emph{Hopf commutator} $[h,k]$ by
\begin{equation*}
[h,k]=h_{(1)}k_{(1)}S(h_{(2)})S(k_{(2)}).
\end{equation*} 

\begin{lem} 
\label{Com}
Let $K$ and $L$ be Hopf subalgebras of $H$. 
The following conditions are equivalent:
\begin{enumerate}
\renewcommand{\labelenumi}{(\roman{enumi})}
\item $kl = lk$ for all $k \in K$ and $l \in L$;
\item $[l,k] = \e(l)\e(k)$ for all $k \in K$, $l \in L$.
\end{enumerate}
\end{lem}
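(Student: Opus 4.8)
The plan is to prove the two implications separately; (i)$\Rightarrow$(ii) is a one-line computation, while (ii)$\Rightarrow$(i) requires a trick, namely ``inverting'' the Hopf commutator.

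For (i)$\Rightarrow$(ii), I would expand $[l,k]=\sum l_{(1)}k_{(1)}S(l_{(2)})S(k_{(2)})$. Since $L$ is a Hopf subalgebra we have $S(l_{(2)})\in L$, so hypothesis (i) permits the swap $k_{(1)}S(l_{(2)})=S(l_{(2)})k_{(1)}$, and the expression becomes
\[
\sum l_{(1)}S(l_{(2)})\,k_{(1)}S(k_{(2)})=\Bigl(\sum l_{(1)}S(l_{(2)})\Bigr)\Bigl(\sum k_{(1)}S(k_{(2)})\Bigr);
\]
the antipode and counit axioms collapse the two factors to $\e(l)1$ and $\e(k)1$, giving $\e(l)\e(k)$.

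For (ii)$\Rightarrow$(i), the key point, proved using only the Hopf algebra axioms, is the identity
\[
\sum [l_{(1)},k_{(1)}]\,k_{(2)}\,l_{(2)}=lk ,
\]
valid for all $l\in L$, $k\in K$ (note that $l_{(1)},l_{(2)}\in L$ and $k_{(1)},k_{(2)}\in K$ precisely because $L$ and $K$ are Hopf subalgebras). To prove it I would expand the inner commutator; coassociativity rewrites the left-hand side as $\sum l_{(1)}k_{(1)}S(l_{(2)})S(k_{(2)})k_{(3)}l_{(3)}$, and two applications of the standard identity $\sum x_{(1)}\otimes S(x_{(2)})x_{(3)}=x\otimes 1$ --- first contracting the pair $S(k_{(2)})k_{(3)}$, then the pair $S(l_{(2)})l_{(3)}$ --- telescope this down to $lk$. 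On the other hand, applying hypothesis (ii) to each inner commutator turns the same sum into $\sum\e(l_{(1)})\e(k_{(1)})\,k_{(2)}\,l_{(2)}=kl$ by the counit axiom. Comparing the two evaluations gives $kl=lk$, which is (i).

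The routine work is the Sweedler bookkeeping in the second implication: keeping the coassociative relabelling straight when the inner commutator is expanded, and checking that the telescoping identity still applies with the factors $S(l_{(2)})$ and $k$ sitting interleaved between the pieces being contracted. The main thing I would flag is the choice of the ``inverting word'' $k_{(2)}l_{(2)}$ placed to the right of the commutator: with the reversed order $l_{(2)}k_{(2)}$ the expression no longer telescopes without already knowing commutativity. It is also worth stating explicitly where the hypothesis that $K$ and $L$ are \textit{Hopf} subalgebras (not merely subalgebras) enters --- namely through $\Delta(L)\subseteq L\otimes L$, $\Delta(K)\subseteq K\otimes K$ (so that (ii) may be applied to first Sweedler components) and $S(L)\subseteq L$ (so that the swap in the first implication is legitimate).
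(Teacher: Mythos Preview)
Your proposal is correct and follows essentially the same approach as the paper. The paper's proof is identical in spirit: for (i)$\Rightarrow$(ii) it performs exactly your swap, and for (ii)$\Rightarrow$(i) it uses the same ``inverting word'' identity (with the roles of $k$ and $l$ interchanged, writing $kl=[k_{(1)},l_{(1)}]l_{(2)}k_{(2)}$), then applies (ii) and the counit axiom just as you do.
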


\begin{proof}
Assuming (i) we compute $[l,k] = l_{(1)}k_{(1)}S(l_{(2)})S(k_{(2)}) = l_{(1)}S(l_{(2)})k_{(1)}S(k_{(2)}) = \e(l)\e(k)$; so (ii) holds. 
Conversely, assuming (ii) we compute $kl=[k_{(1)},l_{(1)}]l_{(2)}k_{(2)}=\e(k_{(1)})\e(l_{(1)})l_{(2)}k_{(2)}=lk$; so (i) holds.
\end{proof}

\subsection{Hopf Centers}

Let $H$ be a finite-dimensional Hopf $\k$-algebra and let $\zeta(H)$ denote the unique the largest Hopf 
subalgebra of $H$ that is contained in the ordinary center, $\cen H$. For any irreducible representation $V$ of $H$,
Schur's Lemma gives the inclusion
\[
\zeta(H) \subseteq \Hcen(V).
\] 
The reverse inclusion holds if $V$ is \emph{inner faithful}, that is,
no nonzero Hopf ideal of $H$ annihilates $V$.

\begin{lem}
\label{InnerFaithful}
Let $H$ be a finite-dimensional Hopf $\k$-algebra and let $V$ be an inner faithful 
representation of $H$. Then 
\[
\sH\cen(V) \subseteq \zeta(H).
\]
\end{lem}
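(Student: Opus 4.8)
The plan is to prove the ostensibly stronger statement that $\Hcen(V)$ is contained in the ordinary center $\cen H$. Since $\Hcen(V)$ is by construction a Hopf subalgebra of $H$, this immediately yields $\Hcen(V)\subseteq\zeta(H)$, as $\zeta(H)$ is the largest Hopf subalgebra contained in $\cen H$. Write $K=\Hcen(V)$ and let $\rho\colon H\to\End_\k(V)$ be the given representation. By definition $\rho(k)\in\k\,\Id_V$ for every $k\in K$, and a scalar operator is central in $\End_\k(V)$; hence $\rho(hk)=\rho(h)\rho(k)=\rho(k)\rho(h)=\rho(kh)$, so that $hk-kh\in\ann_H V=\Ker\rho$ for all $h\in H$ and all $k\in K$.

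The next step is to package these relations into a Hopf ideal. Let $J$ be the two-sided ideal of $H$ generated by $\{hk-kh : h\in H,\ k\in K\}$; since $\ann_H V$ is a two-sided ideal, $J\subseteq\ann_H V$. I claim $J$ is a \emph{Hopf} ideal. The counit kills the generators, $\e(hk-kh)=0$, and $S$ sends a generator to minus a generator, $S(hk-kh)=S(k)S(h)-S(h)S(k)=-(S(h)S(k)-S(k)S(h))$, using $S(K)\subseteq K$. For the comultiplication one exploits that $K$ is a Hopf subalgebra, so $\D(k)\in K\ot K$: adding and subtracting $\sum k_{(1)}h_{(1)}\ot h_{(2)}k_{(2)}$ gives
\[
\D(hk-kh)=\sum(h_{(1)}k_{(1)}-k_{(1)}h_{(1)})\ot h_{(2)}k_{(2)}+\sum k_{(1)}h_{(1)}\ot(h_{(2)}k_{(2)}-k_{(2)}h_{(2)}),
\]
which lies in $J\ot H+H\ot J$ precisely because $k_{(1)},k_{(2)}\in K$. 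A routine computation then propagates all three properties from the generators to all of $J$ (using that $J\ot H$ and $H\ot J$ are two-sided ideals of $H\ot H$), so $J$ is a Hopf ideal.

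To conclude: $J$ is a Hopf ideal of $H$ that annihilates $V$, so the inner faithfulness hypothesis forces $J=0$. In particular $hk=kh$ for all $h\in H$ and $k\in K$, hence $K\subseteq\cen H$, which completes the proof. I expect the only genuine obstacle to be the verification that $J$ is a coideal; this is exactly the place where it matters that $\Hcen(V)$ was defined as a Hopf subalgebra of $\rho^{-1}(\k\,\Id_V)$ rather than as the whole subalgebra $\rho^{-1}(\k\,\Id_V)$. One could alternatively run the commuting step through the Hopf commutators of Lemma~\ref{Com}, observing that $[h,k]-\e(h)\e(k)1\in\Ker\rho$ for $k\in K$; but the corresponding $S$-stability check is less transparent there (it involves $S^2$), so working with ordinary commutators $hk-kh$ seems cleaner.
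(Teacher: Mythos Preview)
Your proof is correct and takes a genuinely different route from the paper's. The paper works with Hopf commutators $[h,k]$ and proves, by induction on $n$, that $[h,k]$ acts as $\e(h)\e(k)\Id$ on every tensor power $V^{\ot n}$; it then invokes Rieffel's theorem (\cite{Rm67}) that inner faithfulness of $V$ is equivalent to ordinary faithfulness of $\T V=\bigoplus_{n\ge 0}V^{\ot n}$ to conclude $[h,k]=\e(h)\e(k)$, and finishes via Lemma~\ref{Com}. Your argument avoids both the tensor-power induction and the external appeal to Rieffel: you show directly that the two-sided ideal generated by the ordinary commutators $hk-kh$ is a Hopf ideal (the coideal check being exactly where $\D(K)\subseteq K\ot K$ is used), so inner faithfulness kills it outright. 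This is more elementary and entirely self-contained. The paper's approach, by contrast, foregrounds the Hopf-commutator formalism and the passage to tensor powers, which is thematically aligned with the tensor-power trick used again in Section~\ref{S:Pf}.
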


\begin{proof}
Put $K:= \Hcen(V)$. In view of Lemma~\ref{Com}, we need to show that, for all  $h\in H$ and $k\in K$,
\[
[h,k] = \e(h)\e(k).
\]
To this end, consider the representation map $\rho \colon H \to \End_\k(V)$ and the 
$n\Th$ tensor power $V^{\ot n}$, with corresponding
algebra map $\rho^{\otimes n}\circ\Delta^{n-1}:H\rightarrow H^{\ot n}
\to \End_\k(V)^{\ot n} \cong \End_\k(V^{\ot n})$.
Writing $h_{V^{\ot n}} \in \End_\k(V^{\ot n})$ for the image of $h \in H$ under this map,
we claim that
\[
[h,k]_{V^{\ot n}} = \epsilon(h)\epsilon(k)\Id_{V^{\ot n}} \qquad (h \in H, k \in K).
\]
It will then follow that the element $[h,k]\in H$ acts on $\T V = \bigoplus_{n\ge 0} V^{\otimes n}$ as the 
scalar operator $\epsilon(h)\epsilon(k)\Id_{\T V}$. Since inner faithfulness 
of $V$ is equivalent to faithfulness of $\T V$ in the usual sense by \cite{Rm67}, this will yield the
desired conclusion, $[h,k] = \e(h)\e(k)$.

To prove the claim, we proceed by induction on $n$. The base case $n=0$ states the obvious identity
$\e([h,k]) = \e(h)\e(k)$. For the inductive step, note that
\[
[h,k]_{V^{\ot n}} = \rho_V(h_{(1)}k_{(1)}S(h_{(2n)})S(k_{(2n)}))\ot...\ot \rho_V(h_{(n)}k_{(n)}S(h_{(n+1)})S(k_{(n+1)})).
\]
Since $\Delta^{2n-1}(k)\in \Hcen(V)^{\ot 2n}$, we can move $S(k_{(n+1)})$ past $S(h_{(n+1)})$ to 
rewrite the right hand side above in the following form:
\[
\begin{aligned}
\rho_V&(h_{(1)}k_{(1)}S(h_{(2n)})S(k_{(2n)}))\ot...\ot \rho_V(h_{(n)}k_{(n)}S(k_{(n+1)})S(h_{(n+1)}))\\
&= \rho_V(h_{(1)}k_{(1)}S(h_{(2n-2)})S(k_{(2n-2)}))\ot...\ot \rho_V(h_{(n-1)}k_{(n-1)}S(k_{(n)})S(h_{(n)}))\ot \Id_V\\
&= [h,k]_{V^{\ot n-1}}\ot \Id_V = \epsilon(h)\epsilon(k)\Id_{V^{\ot n-1}} \ot \Id_V = \epsilon(h)\epsilon(k)\Id_{V^{\ot n}}\,,
\end{aligned}
\]
where the penultimate equality uses our inductive hypothesis. This completes the proof.
\end{proof}

\section{Proof of the Theorem}
\label{S:Pf} 

We first show that $\dim_\k V$ divides $\frac{\dim_\k H}{\dim_\k \zeta(H)}$.
Consider the representation map $\rho \colon H \to \End_\k(V)$, which is onto, and
note that $V^{\ot n}$ is an irreducible representation of $H^{\ot n}$ for each $n \ge 0$, because $\rho^{\ot n}$ maps
$H^{\ot n}$ onto $\End_\k(V)^{\ot n} \cong \End_\k(V^{\ot n})$. Since $\zeta(H)$ is commutative,
the multiplication map $\mu_n:=m^{\ot{n-1}}|_{\zeta(H)^{\ot n}}:\zeta(H)^{\ot n}\rightarrow \zeta(H)$ is a morphism of 
Hopf algebras. Hence, $\Ker \mu_n$ is a Hopf ideal of $\zeta(H)^{\ot n}$. Furthermore, the
following diagram commutes:
\begin{equation*}
\begin{tikzpicture}[baseline=(current  bounding  box.center),  >=latex, scale=.7,
bij/.style={above,sloped,inner sep=0.5pt}]
\matrix (m) [matrix of math nodes, row sep=3em,
column sep=3em, text height=1.5ex, text depth=0.25ex]
{\k^{\ot n}  & \k \\ \zeta(H)^{\ot n} & \zeta(H) \\};
\draw[->] (m-1-1) edge node[bij] {$\sim$} (m-1-2);
\draw[->>] (m-2-1) edge node[auto] {\scriptsize $\mu_n$} (m-2-2);
\draw[->>] (m-2-1) edge node[left] {\scriptsize $\rho^{\ot n}$} (m-1-1);
\draw[->>] (m-2-2) edge node[right] {\scriptsize $\rho$} (m-1-2);
\end{tikzpicture} 
\end{equation*}
Thus, $\rho^{\ot n}(\Ker\mu_n)=0$ and so $V^{\ot n}$ is an irreducible representation of 
the Hopf algebra $H_n:=H^{\ot n}/(\Ker \mu_n)H^{\ot n}$, which belongs to $\mathfrak{C}$.
Consequently, $\dim_\k V^{\ot n} = (\dim_\k V)^n$ divides $\dim_\k H_n$ by {\FD}.
Moreover, putting $d:= \dim_\k H$ and $\d:= \dim_k \zeta(H)$ for brevity, we know by the Nichols-Zoeller
Theorem that $H^{\ot n}$ is free of rank $(\frac{d}{\d})^n$ as module over $\zeta(H)^{\ot n}$. Therefore,
\[
\dim_\k (\Ker \mu_n)H^{\ot n} = (\dim_\k \Ker\mu_n)\left(\frac{d}{\d}\right)^n 
= (\d^n-\d)\left(\frac{d}{\d}\right)^n = d^n - \frac{d^n}{\d^{n-1}}\,,
\]
and so $\dim_\k H_n = \frac{d^n}{\d^{n-1}}$. We have shown that $(\dim_\k V)^n$ divides $\frac{d^n}{\d^{n-1}}$ 
for all $n$; in other words, the fraction $q:= \frac{d}{\d\,\dim_\k V}$
satisfies $q^n \in \frac{1}{\d}\ZZ$ for all $n$.
It follows that $q$ is integral over $\ZZ$, and hence $q \in \ZZ$, proving that
$\dim_\k V$ divides $\frac{d}{\d}$\,.

To obtain the stronger assertion, that $\dim_\k V$ divides $\frac{\dim_\k H}{\dim_\k \sH\cen(V)}$, 
let $\sH\Ker V$ denote the largest Hopf ideal of $H$ that is contained in the kernel of $\rho$ and
consider the canonical epimorphism $\overline{\phantom{X}} \colon H \onto \overline{H} =H/\sH\Ker V$.
Then $\overline{H} \in \mathfrak{C}$ and $V$ 
can be viewed as an inner-faithful irreducible 
representation of $\overline{H}$. Thus, $\dim_\k V$ divides 
$\frac{\dim_\k(\overline{H})}{\dim_\k \zeta(\overline{H})}$ by the foregoing. Hence, it suffices to show that 
$\frac{\dim_\k(\overline{H})}{\dim_\k \zeta(\overline{H})}$
divides $\frac{\dim_\k H}{\dim_\k \sH\cen(V)}$.   Writing $ \Hcen_{\overline{H}}(V)$ for the Hopf center of $V$,
viewed as a representation of $\overline H$, we have 
\[
\overline{\Hcen(V)}
\subseteq \Hcen_{\overline{H}}(V) = \zeta(\overline{H}),
\]
where the last equality holds by Lemma \ref{InnerFaithful}. 
Thus, the canonical Hopf epimorphism $H\onto \overline{H} \onto \overline{H}/\overline{H}\zeta(\overline{H})^+$ factors 
through the epimorphism $H \onto H/H\sH\cen(V)^+$; note that $\zeta(H)$ and $\Hcen(V)$ are normal Hopf subalgebras 
of $H$. This gives an epimorphism $H/H\Hcen(V)^+ \onto \overline{H}/\overline{H}\zeta(\overline{H})^+$,
and hence a Hopf monomorphism
$(\overline{H}/\overline{H}\zeta(\overline{H})^+)^* \into (H/H\sH\cen(V)^+)^*$. 
The Nichols-Zoeller Theorem now yields the desired conclusion that
$\dim_\k \overline{H}/\overline{H}\zeta(\overline{H})^+ = \frac{\dim_\k(\overline{H})}{\dim_\k \zeta(\overline{H})}$
divides $\dim_\k H/H\sH\cen(V)^+ = \frac{\dim_\k H}{\dim_\k \sH\cen(V)}$\,, finishing the proof.

\section{Some Applications}
\label{S:Apps}

In this section, we assume that $\ch\k = 0$.

\begin{cor}
Let $H$ be a semisimple quasitriangular Hopf algebra and let $V \in\Irr H $.
Then $\dim_\k V$ divides $\dim_\k H/\dim_\k \sH\cen(V)$.
\end{cor}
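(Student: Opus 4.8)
The plan is to invoke the Main Theorem for the class $\fC$ of all finite-dimensional semisimple quasitriangular Hopf $\k$-algebras. Since $H$ belongs to $\fC$ and the conclusion of the Main Theorem is exactly the asserted divisibility, it suffices to verify its three hypotheses: that $\fC$ is closed under tensor products, that $\fC$ is closed under Hopf homomorphic images, and that every member of $\fC$ satisfies \FD.

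The two closure properties are routine. If $(H,R_H)$ and $(K,R_K)$ are quasitriangular, then $H\otimes K$ carries the standard quasitriangular structure $R=(R_H)_{13}(R_K)_{24}$ (equivalently, $\mathrm{Rep}(H\otimes K)\simeq\mathrm{Rep}(H)\boxtimes\mathrm{Rep}(K)$ as braided fusion categories); and a tensor product of finite-dimensional semisimple algebras over the algebraically closed field $\k$ is again semisimple, being a product of matrix algebras. Likewise, if $\pi\colon H\onto\overline H$ is a surjection of Hopf algebras and $(H,R)$ is quasitriangular, then $(\overline H,(\pi\otimes\pi)(R))$ is quasitriangular; and if $\Lambda\in H$ is a left integral with $\e(\Lambda)\neq 0$ — which exists because $H$ is semisimple — then $\pi(\Lambda)$ is a left integral of $\overline H$ with $\e(\pi(\Lambda))=\e(\Lambda)\neq 0$, so $\overline H$ is semisimple by the Maschke theorem of Larson and Sweedler. (Note that this last point does not in fact use $\ch\k=0$.)

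The essential input is that every $H\in\fC$ satisfies \FD: for a semisimple quasitriangular Hopf algebra over a field of characteristic zero, $\dim_\k V$ divides $\dim_\k H$ for every irreducible representation $V$. This is a theorem of Etingof and Gelaki, and it is precisely here that the standing hypothesis $\ch\k=0$ of this section is genuinely used. Granting it, the Main Theorem applies directly and yields the corollary. I expect this to be the only real obstacle: without the Etingof–Gelaki divisibility the statement would subsume a nontrivial special case of Kaplansky's conjecture, whereas the two closure properties, as indicated above, present no difficulty.
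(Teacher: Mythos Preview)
Your argument is correct and matches the paper's own proof, which simply asserts that the class of semisimple quasitriangular Hopf $\k$-algebras is closed under tensor products and quotients and satisfies \FD\ by \cite{EPGS98}. You have merely supplied the routine details the paper leaves implicit.
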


\begin{proof}
The class of semisimple quasitriangular Hopf $\k$-algebras is closed under tensor products and quotients, 
and semisimple quasitriangular Hopf $\k$-algebras satisfy \textbf{FD} by \cite{EPGS98}.
\end{proof}

\begin{cor}
Let $H$ be a semisimple Hopf $\k$-algebra and let $V \in \Irr H$ be such that $\chi_V\in \cen(H^*)$. 
Then $\dim_\k V$ divides $\frac{\dim_\k H}{\dim_\k\sH\cen(V)}$.
\end{cor}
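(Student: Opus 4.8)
The plan is to reduce the statement to the Main Theorem by exhibiting a class $\fC$ to which $H$ belongs and verifying its hypotheses, so the only real work is to understand the role of the condition $\chi_V \in \cen(H^*)$. First I would recall that any semisimple Hopf algebra over $\k$ with $\ch\k = 0$ satisfies \FD\ only conjecturally, so we cannot simply take $\fC$ to be the class of all semisimple Hopf algebras; instead the condition $\chi_V \in \cen(H^*)$ must be used to produce a \emph{smaller} Hopf algebra through which $V$ factors, one that we can control. The natural candidate is the Hopf algebra $\overline H = H/\sH\Ker V$ on which $V$ becomes inner faithful, exactly as in the proof of the Main Theorem; the point of $\chi_V \in \cen(H^*)$ should be that $\overline H$ is in fact commutative, hence a group algebra, hence satisfies \FD\ by Frobenius.

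The key steps, in order, would be: (1) Translate $\chi_V \in \cen(H^*)$ into a statement about $H$. Recall that $H^*$ is semisimple (since $H$ is, in characteristic $0$), so $\cen(H^*)$ is spanned by the irreducible characters of $H^*$, equivalently by a complete set of orthogonal primitive idempotents of $\cen(H^*)$; the condition says $\chi_V$ lies in this center. Dualizing, the character $\chi_V \in H^*$ being central in $H^*$ means that $\chi_V$ generates a \emph{commutative} Hopf subalgebra, or better, that the two-sided ideal structure it controls is small. (2) Identify $\sH\Ker V$ concretely and show $\overline H = H/\sH\Ker V$ is cocommutative — equivalently that $\overline H^*$ is commutative. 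I expect this to follow because the subcoalgebra $C_V \subseteq H^*$ spanned by the matrix coefficients of $V$ satisfies $C_V \subseteq \cen(H^*)$ once $\chi_V$ is central (the centrality of the character forces the whole coefficient coalgebra of that isotypic block into the center), and the Hopf subalgebra of $H^*$ generated by a central subcoalgebra is commutative; its annihilator in $H$ is then $\sH\Ker V$ (up to checking it is the largest Hopf ideal killing $V$), making $\overline H$ the dual of a commutative Hopf algebra, i.e.\ cocommutative. (3) Invoke the structure of cocommutative semisimple Hopf algebras over an algebraically closed field of characteristic $0$: such an $\overline H$ is a group algebra $\k G$ for a finite group $G$. (4) Now let $\fC$ be the class of all Hopf algebras of the form $\k G$ with $G$ finite; it is closed under tensor products ($\k G \ot \k G' \cong \k(G\times G')$) and under Hopf homomorphic images (quotients of $\k G$ are group algebras of quotients of $G$), and every member satisfies \FD\ by Frobenius' Theorem. (5) Apply the Main Theorem to $\overline H \in \fC$ and the inner-faithful irreducible representation $V$ to get that $\dim_\k V$ divides $\frac{\dim_\k \overline H}{\dim_\k \Hcen_{\overline H}(V)} = \frac{\dim_\k \overline H}{\dim_\k \zeta(\overline H)}$, using Lemma~\ref{InnerFaithful}. (6) Finish with the same divisibility chase as in the last paragraph of the proof of the Main Theorem: $\overline{\Hcen(V)} \subseteq \zeta(\overline H)$ yields a Hopf epimorphism $H/H\Hcen(V)^+ \onto \overline H/\overline H\zeta(\overline H)^+$, and Nichols--Zoeller gives that $\frac{\dim_\k \overline H}{\dim_\k \zeta(\overline H)}$ divides $\frac{\dim_\k H}{\dim_\k \Hcen(V)}$, completing the argument.

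The main obstacle I anticipate is Step (2): showing rigorously that $\chi_V \in \cen(H^*)$ forces $\overline H = H/\sH\Ker V$ to be cocommutative. One has to be careful about what $\sH\Ker V$ is — it is the largest Hopf ideal inside $\ker\rho$, not merely the largest coideal or ideal — and one must check that modding out by it kills precisely the noncommutativity of $H^*$ relevant to $V$. The cleanest route is probably dual: work in $H^*$, let $A \subseteq H^*$ be the smallest Hopf subalgebra containing $C_V$ (the coefficient coalgebra of $V$); argue $C_V \subseteq \cen(H^*)$, deduce $A$ is commutative since it is generated as an algebra by a subcoalgebra lying in the commutative subalgebra $\cen(H^*)$ — here one uses that $\cen(H^*)$, being a subalgebra, contains all products of elements of $C_V$, so $A \subseteq \cen(H^*)$ and $A$ is commutative; then $\overline H = A^*$ is cocommutative, $V$ is a faithful $A^*$-module, and $\dim_\k A^* = \dim_\k A$. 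The verification that $A^* \cong H/\sH\Ker V$ (rather than some other quotient) requires matching "smallest Hopf subalgebra of $H^*$ through which the $V$-coaction factors" with "largest Hopf ideal of $H$ inside $\ker\rho$" — a standard but slightly delicate duality that should be spelled out, possibly citing the analogous discussion around inner faithfulness in Section~\ref{S:Prelims}.
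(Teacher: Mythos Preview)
Your strategy hinges on Step~(2), the claim that $\chi_V \in \cen(H^*)$ forces the entire coefficient coalgebra $C_V$ into $\cen(H^*)$, hence that $\overline H = H/\sH\Ker V$ is cocommutative and therefore a group algebra. This step is unjustified and in fact fails. The element $\chi_V$ is a single vector in the $(\dim_\k V)^2$-dimensional matrix coalgebra $C_V$; knowing that this one element is central says nothing about the remaining matrix coefficients $\rho_{ij}$. Concretely, $\cen(H^*)$ has dimension $|\Irr H^*|$, which bears no relation to $(\dim_\k V)^2$. If your claim were true, then any semisimple $H$ admitting an inner-faithful irreducible $V$ with central character would itself be a group algebra; this is much stronger than Zhu's theorem \cite{Zs93} (which your argument would then re-derive trivially from Frobenius) and is not true for, e.g., suitable Drinfeld doubles. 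So the reduction to the class $\fC$ of finite group algebras collapses.

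The paper's argument avoids this by not attempting to place $\overline H$ in any fixed class $\fC$. Instead it re-runs the proof of the Main Theorem directly, replacing the blanket \FD\ hypothesis by Zhu's theorem: whenever the Main Theorem's proof invokes \FD\ for $V^{\ot n}$ over $H_n = H^{\ot n}/(\Ker\mu_n)H^{\ot n}$ or for $V$ over $\overline H$, one checks that the relevant character is still central in the dual. The two stability properties needed are (a) $\chi_{V^{\ot n}} = \chi_V^{\ot n} \in \cen(H^*)^{\ot n} \subseteq \cen((H^{\ot n})^*)$, and (b) for any Hopf ideal $I \subseteq \sH\Ker V$ one has $\chi_V \in I^\perp = (H/I)^*$ and $\chi_V$ remains central there. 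Both are immediate. With Zhu's theorem supplying the required divisibility at each step, the Tate-style argument of Section~\ref{S:Pf} goes through verbatim. Your Steps~(5)--(6) are essentially this latter part; the correct fix is to abandon Steps~(2)--(4) and feed Zhu's theorem into the machine instead.
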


\begin{proof}
Let $I$ be a Hopf ideal of $H$ with $I\subseteq \sH\Ker(V)$. 
Then, as in the last part of the proof of the theorem, $V$ descends to a 
representation of $\overline{H}=H/I$, and the character $\chi_V$ belongs to the (Hopf) subalgebra
$\overline{H}^*=I^\perp$ of $H^*$. Therefore, $\chi_V\in\cen(\overline{H}^*)$.
Also, viewing $V^{\ot n}$ as a representation of $H^{\ot n}$
as in the first part of the proof of the Theorem, we have 
$\chi_{V^{\ot n}} = \chi_V^{\ot n}\in\cen((H^{\ot n})^*)$. 
Lastly, by \cite{Zs93}, we know that the degree of any central irreducible character of 
a finite-dimensional Hopf algebra must divide the dimension 
of the Hopf algebra. With these observations, the proof of the theorem goes through.
\end{proof}

%%%%%%%%%%%%%%%%%%%%%%%%%%%%%%%%%%%%%%%%%%%%%%%%%%%%

\begin{ack}
The results of this note are part of my Ph.D. thesis, written under the direction of my adviser, Professor Martin Lorenz. 
He has my sincerest thanks for his guidance and generosity in sharing his insights.
\end{ack}

%%%%%%%%%%%%%%%%%%%%%%%%%%%%%%%%%%%%%%%%%%%%%%%%%%
%% Bibliography
%%%%%%%%%%%%%%%%%%%%%%%%%%%%%%%%%%%%%%%%%%%%%%%%%%
\renewcommand{\emph}[1]{{\itshape #1}}

\bibliographystyle{amsplain}
\providecommand{\bysame}{\leavevmode\hbox to3em{\hrulefill}\thinspace}
\providecommand{\MR}{\relax\ifhmode\unskip\space\fi MR }
% \MRhref is called by the amsart/book/proc definition of \MR.
\providecommand{\MRhref}[2]{%
  \href{http://www.ams.org/mathscinet-getitem?mr=#1}{#2}
}
\providecommand{\href}[2]{#2}

%%%%%%%%%%%%%%%%%%%%%%%%%%%%%%%%%%%%%%%%%%%%%%%%%%

\end{document}